\theoremstyle{plain}
\newtheorem{theorem}[subsubsection]{Theorem}
\newtheorem{lemma}[subsubsection]{Lemma}
\newtheorem{corollary}[subsubsection]{Corollary}
\theoremstyle{definition}
\newtheorem{definition}[subsubsection]{Definition}
\newtheorem{definition-proposition}[subsubsection]{Definition/Proposition}
\newtheorem{question}[subsubsection]{Question}
\newtheorem{example}[subsubsection]{Example}
\theoremstyle{remark}
\newtheorem{remark}[subsubsection]{Remark}
\numberwithin{equation}{section}
\newcommand{\ZZ}{\mathbb{Z}}
\newcommand{\OO}{\mathcal{O}}
\newcommand{\Hom}{\mathrm{Hom}}
\newcommand{\Coh}{\mathrm{Coh}}
\newcommand{\cA}{\mathcal{A}}
\newcommand{\cC}{\mathcal{C}}
\newcommand{\cB}{\mathcal{B}}
\newcommand{\cH}{\mathcal{H}}
\newcommand{\too}{\longrightarrow}
\newcommand{\Db}{\mathrm{D}^{b}}
\newcommand{\KGdim}{\mathrm{KGdim}}
\newcommand{\cD}{\mathcal{D}}
\newcommand{\cT}{\mathcal{T}}
\newcommand{\Dp}{\mathrm{D}^{\mathrm{perf}} }
	\title{Geometric phantom categories do not admit Noetherian t-structures}
	\date{}
	\author{Yeqin Liu}
\begin{document}
	
	\maketitle
	
	\begin{abstract}
There are no Noetherian or Artinian bounded t-structures on geometric phantom or quasi-phantom categories, in the sense of \cite{GO13}.
	\end{abstract}
	
	\section{Introduction}

\begin{definition}[Slight generalization from \cite{GO13}]\label{def-phantom}
	Let $X$ be a quasi-compact, quasi-separated scheme. An admissible subcategory $\cC\subset \Dp(X)$ is a \emph{geometric phantom category} if $K_{0}(\cC)=0$. It is a \emph{geometric quasi-phantom category} if $K_{0}(\cC)$ is a torsion group.
\end{definition}

The presence of (quasi-)phantom categories is considered as a pathology. For instance, it can obstruct the Jordan-H\"{o}lder property of semi-orthogonal decompositions of triangulated categories \cite{BGS14}, and raise problems in finding Bridgeland stability conditions. It was once believed in the literature that geometric (quasi-)phantom categories do not exist. However, phantom categories seem ubiquitous (e.g. \cite{GO13, AO13, BGKS15, Kra24}), and a tool is developed to detect such categories \cite{Kuz15}.

 To formulate a reasonable moduli problem on a triangulated category, the existence of Bridgeland stability \cite{Bri07} or polynomial stability \cite{Bay09} conditions is very useful. However since the Grothendieck groups of (quasi-)phantom categories are torsion, there cannot be a central charge on such categories, and therefore Bridgeland/polynomial stability conditions cannot exist. By \cite{HL23}, phantom categories can also obstruct stability conditions on smooth proper noncommutative schemes with rank one $K_{0}$.

In the study of triangulated categories, bounded t-structures are useful tools and define the fundamental structures of the categories. 
The notion of stability conditions can be defined by a slicing \cite{Bri07, Bay09}, which always yields a bounded t-structure. Hence for (quasi-)phantom categories, although there is no stability conditions on them, one may ask the following natural question, which is addressed in \cite{Sos20}:

\begin{question}
    Can there exist a bounded t-structure on a (quasi-)phantom category?
\end{question}

 The following theorem partially answers the question: even if there is a bounded t-structure, it does not behave well.  The proof also works for any small triangulated category $\cC$ with $K_{0}(\cC)=0$ (or torsion) with a classical generator.
 
\begin{theorem}[Theorem \ref{theorem-main}]\label{theorem-intro-main}
    Geometric phantom or quasi-phantom categories do not admit Noetherian or Artinian bounded t-structures. 
\end{theorem}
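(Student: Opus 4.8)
The plan is to transport everything to the abelian heart and argue there. Let $\cA\subset\cC$ be the heart of the putative bounded t-structure. Boundedness gives the standard isomorphism $K_{0}(\cC)\cong K_{0}(\cA)$ via $[E]\mapsto\sum_{i}(-1)^{i}[\HH^{i}(E)]$, so the hypothesis becomes: $K_{0}(\cA)$ is a torsion group (zero in the phantom case). Two structural facts come for free: since $\cC\neq 0$ and the t-structure is bounded we have $\cA\neq 0$, and since $G$ classically generates we have $\mathrm{thick}(G)=\cC$, whence $K_{0}(\cC)=\ZZ\cdot[G]$ is cyclic. I would also dispose of the Artinian case at once by duality: the opposite heart $\cA^{\mathrm{op}}$ is the heart of the opposite bounded t-structure on $\cC^{\mathrm{op}}$, where $G$ is still a classical generator and $K_{0}(\cC^{\mathrm{op}})\cong K_{0}(\cC)$ is still torsion, while $\cA$ is Artinian if and only if $\cA^{\mathrm{op}}$ is Noetherian. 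So it suffices to treat a Noetherian heart.

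Next I would manufacture a simple object out of the generator. Let $b$ be the largest degree with $\HH^{b}(G)\neq 0$; this is a nonzero, hence Noetherian, object of $\cA$, so it has a maximal proper subobject, and the resulting quotient is a simple object $S\in\cA$. Its class $[S]\in K_{0}(\cA)$ is torsion, say $n[S]=0$ with $n\geq 1$ (with $n=1$ in the phantom case). Equivalently the nonzero object $M:=S^{\oplus n}$ satisfies $[M]=0$, and the task is to derive a contradiction from the coexistence of such an $M$, the Noetherian condition, and the classical generator.

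The key point to respect is that no numerical invariant can help: every additive map $K_{0}(\cA)\to\ZZ$ — in particular every Euler pairing $\chi(P,-)$, when defined — vanishes identically on a torsion group, so $[M]=0$ carries no numerical content and must be exploited categorically. Unwinding the relation $[M]=0$ in the standard presentation of $K_{0}(\cA)$ produces objects $\mathbf{P},\mathbf{Q}\in\cA$ together with an isomorphism $M\oplus\mathbf{P}\cong\mathbf{Q}$ and finite filtrations of $\mathbf{P}$ and of $\mathbf{Q}$ having the same multiset of subquotients; informally, $M$ is ``stably trivial'' for the extension structure. I would then attempt to bootstrap this stable triviality: because $G$ classically generates and the t-structure is bounded, all objects in sight are assembled from the finitely many cohomologies $\HH^{j}(G)$, and I would use this finiteness to iterate the isomorphism $M\oplus\mathbf{P}\cong\mathbf{Q}$ and glue the matching filtrations into a single object of $\cA$ carrying an infinite, strictly ascending chain of subobjects — contradicting the ascending chain condition.

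The main obstacle is exactly this last construction: converting the stable triviality of $M$ into an honest infinite ascending chain while keeping every object within the finitely generated data furnished by the classical generator. Everything preceding it — the reduction to the heart, the cyclicity of $K_{0}$, the duality handling the Artinian case, the extraction of $S$, and the vanishing of numerical invariants — is formal; the entire weight of the theorem rests on turning the $K_{0}$-relation $[M]=0$ into a violation of the chain condition, and this is the step I expect to require the real work.
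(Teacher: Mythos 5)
Your proposal stalls exactly where you flag it, and the difficulty is not merely that the last step is hard: as formulated, it cannot succeed. You reduce to deriving a contradiction from the coexistence of three things --- a Noetherian heart $\cA$, a classical generator, and a nonzero object $M=S^{\oplus n}$ with $[M]=0$ in $K_{0}(\cA)$ --- but this configuration is consistent. Take $\cC=\Dp(\ZZ)$ with its standard t-structure: the heart is the category of finitely generated abelian groups, which is Noetherian; $\ZZ$ is a classical generator; and the simple object $S=\ZZ/p$ satisfies $[S]=0$ already with $n=1$, via $0\to\ZZ\xrightarrow{\,p\,}\ZZ\to\ZZ/p\to 0$. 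No infinite ascending chain of subobjects exists anywhere, and $K_{0}\cong\ZZ$. So no bootstrapping of the single relation $[M]=0$ through filtrations and the generator can violate the chain condition; any correct argument must exploit that the \emph{entire} group $K_{0}(\cA)$ is torsion, not the torsionness of one simple class. (A side error, unused later: $K_{0}(\cC)=\ZZ\cdot[G]$ is false for a classical generator $G$, since thick closure permits direct summands --- e.g.\ $\Dp(k\times k)$ has a classical generator but $K_{0}\cong\ZZ^{2}$. Your reductions to the heart and to the Noetherian case via opposite categories are fine, though the paper needs no such duality.)

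The paper's mechanism, which your sketch is missing, is the Krull--Gabriel filtration. Noetherianity or Artinianity guarantees that $\KGdim(\cA)=\alpha$ is defined (Lemma \ref{welldefineddim}, citing Krause). The classical generator $G$ of $\cC$ produces $G'=\bigoplus_{i}\cH^{i}_{\cA}(G)$, a generator of $\cA$ in the Serre-subcategory sense; this rests on Lemma \ref{thick closure}, identifying the thick closure of a Serre subcategory $\cA'\subset\cA$ with $\{E:\cH^{i}(E)\in\cA'\}$. The existence of such a generator forces $\alpha$ to be a successor ordinal (Lemma \ref{predecessor}), so the top Serre quotient $\cA/\cA_{\leq\alpha-1}$ is a \emph{nonzero length category}; its $K_{0}$ is free abelian on the simple classes by Jordan--H\"older, hence nonzero and torsion-free, and $K_{0}(\cA)$ surjects onto it by right exactness of $K_{0}$ for Serre quotients. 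That contradicts $K_{0}(\cA)\cong K_{0}(\cC)$ being zero or torsion. In other words, the simple objects that matter live in the top Serre quotient, where all of them together form a free basis of $K_{0}$ --- precisely the torsion-free receptacle that your single simple object $S$ inside $\cA$ itself fails to provide, as the $\ZZ/p$ example shows.
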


\subsection*{Acknowledgment}
My special thank goes to Shengxuan Liu, for contributing the proof of Lemma \ref{thick closure}, many motivating conversations, and polishments of this paper.  
I thank Alexander Perry and Haoyang Liu for many useful discussions, and Izzet Coskun for his suggestions on an early draft. 
	\section{Main result}

    \subsection{T-Structures and generators of triangulated categories}
    
First we recall the notion of a t-structure. 

	\begin{definition}[\cite{BBD82}]\label{def-t-str}
		Let $\cT$ be a triangulated category. A t-structure on $\cT$ is a pair of full subcategories $(\cT^{\leq0},\cT^{\geq0})$ such that 
            \begin{enumerate}
                \item For any objects $E\in\cT^{\leq0}$ and $F\in\cT^{\geq0}$, we have $\Hom(E[1],F)=0$.
                \item\label{2} The category $\cT^{\leq0}$ is closed under $[1]$. This implies that $\cT^{\geq0}$ is closed under $[-1]$.
                \item\label{3} For any object $E\in\cT$, there is an exact triangle
                $
                    \tau_{\leq-1}E\rightarrow E\rightarrow \tau_{\geq0}E\rightarrow \tau_{\leq-1}E,
                $
                such that $\tau_{\leq-1}E\in\cT^{\leq0}[1]$ and $\tau_{\geq0}E\in\cT^{\geq0}$. (The triangle can be shown unique.)
            \end{enumerate}
            Denote $\cT^{\leq n}:=\cT^{\leq0}[-n]$ and $\cT^{\geq n}:=\cT^{\geq0}[-n]$. The full subcategory $\cA:=\cT^{\leq 0} \cap \cT^{\geq 0}$ is called the \emph{heart} of the t-structure. 

            The t-structure is called \emph{bounded}, if the natural inclusion
            $\bigcup_{n\in \ZZ} \cT^{\leq n}\cap \cT^{\geq -n} \too \cT$
            is an equivalence.
            For the rest of this paper, by a \emph{t-structure} we refer to a bounded t-structure.

	\end{definition}

	\begin{example}
	We give some intuitions of t-structures by the following examples.
	\begin{itemize}
		\item Let $\cA$ be an abelian category. 
		Then $\cA$ is the heart of a t-structure of $\Db(\cA)$.
		\item In general, let $\cA\subset \cC$ be the heart of a bounded t-structure on $\cC$. Then $\Db(\cA)$ need not to be equivalent to $\cC$. See Example 27 of \cite{Sch11}.
	\end{itemize}
\end{example}

It is worth mentioning that \cite{AGH19} finds a K-theoretic obstruction of bounded t-structures on triangulated categories. However, this obstruction does not apply to our case: If $\cA\subset \Db(X)$ is an admissible subcategory of a smooth projective variety, then $K_{i}(\cA)$ is a direct summand of $K_{i}(\Db(X))$ for $i\in \ZZ$. Since $K_{i}(X)=0$ for every $i\leq -1$, we have $K_{i}(\cA)=0$ for every $i\leq -1$. 

Next we recall the notion of generators of triangulated categories.
	
	\begin{definition}\label{def-generator-triangulated}
		Let $\cD$ be a small triangulated category. A triangulated subcategory $\cC\subset \cD$ is \emph{thick}, if it is closed under direct summands. For a set of objects $S$, its \emph{thick closure} is the smallest thick full triangulated subcategory containing $S$. An object $G\in \cD$ is a \emph{classical generator}, if its thick closure is $\cD$. 
	\end{definition}

	\begin{theorem}[\cite{BV03}]\label{theorem-classicalgenerator}
		Let $X$ be a quasi-compact, quasi-separated scheme. Then $\Dp(X)$ admits a classical generator. 
	\end{theorem}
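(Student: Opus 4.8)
The plan is to produce a single perfect complex whose thick closure is all of $\Dp(X)$, working inside the unbounded derived category $D_{\mathrm{qc}}(X)$ of complexes with quasi-coherent cohomology. By the theorem of Thomason--Trobaugh and Neeman, for a quasi-compact quasi-separated scheme $X$ the category $D_{\mathrm{qc}}(X)$ is compactly generated and its subcategory of compact objects coincides with $\Dp(X)$. Moreover, by Neeman's theorem, in a compactly generated triangulated category the compact objects form the thick closure of any set of compact generators; hence a \emph{single} compact generator $G$ of $D_{\mathrm{qc}}(X)$ automatically classically generates $\Dp(X)$. Thus it suffices to exhibit one perfect $G\in D_{\mathrm{qc}}(X)$ such that $\Hom(G[n],F)=0$ for all $n\in\ZZ$ forces $F=0$.

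First I would dispose of the affine case $X=\Spec A$. Here $\OO_X$ is perfect, and the standard fact that every bounded complex of finite projective modules is obtained from $A$ by finitely many shifts, cones and direct summands shows that $\OO_X$ classically generates $\Dp(X)$; equivalently, $\OO_X$ compactly generates $D_{\mathrm{qc}}(X)=D(A)$. I would then induct on the number of affine opens in a cover $X=V_1\cup\cdots\cup V_n$, whose existence is precisely the content of quasi-compactness.

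The inductive step reduces to a gluing statement: if $W=U\cup V$ with $U,V$ quasi-compact opens each carrying a compact generator and $U\cap V$ quasi-compact (guaranteed by quasi-separatedness), then $W$ carries one too. The construction combines a compact generator extended from $V$ with a perfect complex supported on the closed set $Z:=W\setminus V$. The latter exists because the open immersion $U\hookrightarrow W$ is quasi-compact: via Thomason's localization theorem one produces a perfect complex on $W$ with support in $Z$ whose restriction to $U$ generates the relevant localizing subcategory. Verifying that the glued object $G$ generates is then a support argument: any $F$ right-orthogonal to $G$ restricts to $0$ on $V$ by the part of $G$ coming from $V$, hence is supported on $Z$, where orthogonality to the $Z$-supported summand forces $F=0$.

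The main obstacle is exactly this gluing: one must ensure that the glued generator remains \emph{compact}, not merely that it generates as an object of $D_{\mathrm{qc}}(W)$. This is where quasi-separatedness is indispensable, since it keeps the intersections quasi-compact so that the relevant pushforwards preserve perfectness and so that the Neeman--Thomason localization sequence
$$ D_{\mathrm{qc},Z}(W)\to D_{\mathrm{qc}}(W)\to D_{\mathrm{qc}}(U) $$
is compatible with the passage to compact objects. Controlling the interaction of compactness with the localization sequence, rather than the support bookkeeping, is the technical heart of the argument.
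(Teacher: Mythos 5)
The paper gives no proof of this statement at all: it is imported wholesale from Bondal--Van den Bergh \cite{BV03}, and your outline is a reconstruction of exactly the argument in that source --- identify $\Dp(X)$ with the compact objects of $D_{\mathrm{qc}}(X)$ (Thomason--Trobaugh, Neeman), invoke Neeman's theorem that the compacts are the thick closure of any set of compact generators, settle the affine case with $\OO_X$, and induct on an affine cover via the Thomason--Neeman localization sequence. So the architecture is right and is the same as the cited proof, not an alternative to it.

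Two steps in your gluing paragraph, however, do not work as written. First, ``a compact generator extended from $V$'': a perfect complex on a quasi-compact open $V\subset W$ need \emph{not} extend to a perfect complex on $W$ --- by Thomason--Trobaugh there is a $K_{0}$-obstruction, and extension is possible precisely when the class lies in the image of $K_{0}(W)\to K_{0}(V)$. The standard repair, used in \cite{BV03}, is to extend $G_{V}\oplus G_{V}[1]$, whose $K_{0}$-class vanishes; this costs nothing since it generates the same thick subcategory. Second, your verification runs in the wrong order. From $\Hom_{W}(\widetilde{G}_{V},F[n])=0$ for all $n$ you cannot conclude $F|_{V}=0$: adjunction only gives $\Hom_{V}(G_{V}\oplus G_{V}[1],F|_{V}[n])\cong \Hom_{W}(\widetilde{G}_{V},Rj_{*}(F|_{V})[n])$, and $F\not\cong Rj_{*}(F|_{V})$ in general; the discrepancy is the local-cohomology term in the triangle $F_{Z}\to F\to Rj_{*}(F|_{V})$. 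The correct order is the reverse of yours: since $\widetilde{P}|_{V}\cong 0$, one gets $\Hom_{W}(\widetilde{P},Rj_{*}(F|_{V})[n])=0$ for free, hence $\Hom_{W}(\widetilde{P},F_{Z}[n])=0$; as $F_{Z}\in D_{\mathrm{qc},Z}(W)\simeq D_{\mathrm{qc},Z}(U)$ and the Koszul-type complex $P$ compactly generates the latter, this forces $F_{Z}=0$, so $F\cong Rj_{*}(F|_{V})$, and \emph{only then} does adjunction against $\widetilde{G}_{V}$ kill $F|_{V}$. Relatedly, the role of quasi-separatedness is not that ``pushforwards preserve perfectness'' (open-immersion pushforward does not preserve perfection in general), but that $U\cap V$ is quasi-compact, so inside affine $U$ its complement $Z$ is cut out by finitely many functions, and the Koszul complex on these supplies the compact generator of $D_{\mathrm{qc},Z}(U)$. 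With these two standard corrections your proposal becomes the proof of \cite{BV03}.
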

	
	\subsection{Preliminaries on abelian categories}
	
	\begin{definition}
		Let $\cA$ be an abelian category. An object $X$ is \emph{Noetherian} (resp. \emph{Artinian}) if every ascending (resp. descending) chain of subobjects of $X$ stabilizes. The category $\cA$ is \emph{Noetherian} (resp. \emph{Artinian}) if every object of $\cA$ is Noetherian (resp. Artinian). 
	\end{definition}

    Next we recall the notion of Serre subcategories of abelian categories, which allows us to define quotients of abelian categories.
	\begin{definition}\label{def-generator-abelian}
	Let $\cA$ be an abelian category. An abelian subcategory $\cA'\subset \cA$ is a \emph{Serre subcategory}, if it is closed under subobjects, quotient objects, and extensions.  We call an object $F\in \cA$ a \emph{generator}, if $F$ is not contained in any proper Serre subcategory. 
	\end{definition}

	\begin{definition-proposition}
		Let $\cB\subset \cA$ be a Serre subcategory. Then there is an abelian category $\cA/\cB$ called the \emph{quotient category}, and an exact functor $Q:\cA \to \cA/\cB$ surjective on objects, such that for $X\in \cA$, $Q(X)\cong 0$ if and only if $X\in \cB$. The functor $Q$ satisfies the following universal property:
        If $F:\cA \to \cC$ is an exact functor of abelian categories such that for $X\in \cB$ we have $F(X)\cong 0$, then there is a unique exact functor $G: \cA/\cB \to \cC$, such that $F=G\circ Q$.
	\end{definition-proposition}

Recall the following notion of simple objects in abelian categories.
    
	\begin{definition}\label{def-simpleobject}
		Let $\cA$ be an abelian category. An object $F$ is \emph{simple}, if its only subobjects are $F$ and 0. 
	\end{definition}
	
	\begin{example}\label{example-simple}
		The following are some examples of simple objects.
		\begin{enumerate}
			\item Let $\mathrm{Vect}_{k}$ be the category of vector spaces over a field $k$. Then $k$ is simple.
			\item Let $\Coh(X)$ be the category of coherent sheaves on an algebraic variety. Then the skyscraper sheaf $\OO_{x}, x\in X$ is simple. 
            \item Let $G$ be a finite group and $\mathrm{Rep}(G)$ be the category of finite dimensional representations of $G$. Then irreducible representations are simple.
		\end{enumerate}
	\end{example}

    The following definition is the key idea to prove the main theorem.
	
	\begin{definition}[\cite{Gab62, GJ81}] \label{def-KGdim}
		Let $\cA$ be an abelian category. For every ordinal $\alpha$, define the following Serre subcategories inductively:
		\begin{itemize}
			\item $\cA_{\leq -1}:=0$, 
		    \item $\cA_{\leq \alpha}$:= the Serre subcategory generated by objects that are simple (Definition \ref{def-simpleobject}) in $\cA/\cA_{\leq \alpha-1}$, if $\alpha$ has a predecessor,
		    \item $\cA_{\leq \alpha}:= \bigcup_{\alpha'<\alpha} \cA_{\leq \alpha'}$, if $\alpha$ is a limit ordinal. 
		\end{itemize}
		If $\cA=\cA_{\leq \alpha}$ for some ordinal $\alpha$, the \emph{Krull-Gabriel dimension} (denoted by $\KGdim(\cA)$) of $\cA$ is defined to be the minimal of such $\alpha$. Otherwise $\KGdim(\cA)$ is not defined.
	\end{definition}

The geometric intuition of Definition \ref{def-KGdim} is explained in the following example.
    
	\begin{example}
		Let $X$ be an algebraic variety. Using notations in Definition \ref{def-KGdim}, for every $d\in \ZZ_{\geq 0}$, we claim $$\Coh(X)_{\leq d}= \mbox{full subcategory whose objects are coherent sheaves } F \mbox{ with} \dim (supp(F))\leq d. $$ 
		We see the claim for $d=0$ in Example \ref{example-simple}. In general, let $F\in \Coh(X)$ with $\dim(supp(F))=d$. If the Fitting ideal of $F$ corresponds to an irreducible variety in $X$, then $F$ is simple in $\Coh(X)/\Coh(X)_{\leq d-1}$. Every coherent sheaf $E$ with $\dim(supp(E))=d$ admits a filtration in $\Coh(X)/\Coh(X)_{\leq d-1}$ whose graded pieces are simple (i.e. the sheaves $F$ as above). Hence the claim follows, and we see that
		$\KGdim(\Coh(X))=\dim(X). $
	\end{example}

 Noetherianity/Artinianity condition in Theorem \ref{theorem-intro-main} is only needed for the following lemma.
    
	\begin{lemma}\label{welldefineddim}
		Let $\cA$ be a Notherian or Artinian abelian category. Then $\KGdim(\cA)$ is defined. 
	\end{lemma}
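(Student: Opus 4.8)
The plan is to show that the transfinite chain $(\cA_{\leq\alpha})_{\alpha}$ of Serre subcategories from Definition \ref{def-KGdim} is strictly increasing until it exhausts $\cA$, and then to terminate it by a cardinality argument. I treat the Noetherian and Artinian cases in parallel, as they are formally dual.

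Two ingredients drive the induction. First, I record the elementary fact that a \emph{nonzero} Noetherian (resp.\ Artinian) abelian category contains a simple object: for any nonzero $X$, the ascending chain condition applied to the poset of proper subobjects of $X$ produces a maximal proper subobject $Y$, so that $X/Y$ is simple; dually, in the Artinian case the descending chain condition on nonzero subobjects produces a minimal nonzero subobject, which is simple. Second --- and this is the step I expect to be the genuine obstacle --- I show that for any Serre subcategory $\cB\subset\cA$ the quotient $\cA/\cB$ is again Noetherian (resp.\ Artinian). The clean route is to check that for each $X\in\cA$ the quotient functor $Q$ induces an order-embedding of the subobject poset $\mathrm{Sub}(Q(X))$ in $\cA/\cB$ into $\mathrm{Sub}(X)$: every subobject of $Q(X)$ has the form $Q(X')$ for a (suitably $\cB$-saturated) subobject $X'\subseteq X$, and the resulting correspondence is order-preserving and order-reflecting. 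Since a subposet of a poset satisfying the ACC (resp.\ DCC) again satisfies it, $Q(X)$ is Noetherian (resp.\ Artinian). This is classical, going back to \cite{Gab62}, so in the write-up I would either cite it or supply the short saturation argument.

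With these in hand the induction is immediate. Suppose toward a contradiction that $\cA_{\leq\alpha}\neq\cA$ for \emph{every} ordinal $\alpha$. Then at each stage the quotient $\cA/\cA_{\leq\alpha}$ is nonzero and, by the second ingredient, Noetherian (resp.\ Artinian), hence contains a simple object $Q(X)$ by the first ingredient; this $X$ lies in $\cA_{\leq\alpha+1}$ but not in $\cA_{\leq\alpha}$, so $\cA_{\leq\alpha}\subsetneq\cA_{\leq\alpha+1}$. Choosing $X_{\alpha}\in\cA_{\leq\alpha+1}\setminus\cA_{\leq\alpha}$ for each $\alpha$ and using that the chain is non-decreasing, one checks that for $\alpha<\beta$ we have $X_{\alpha}\in\cA_{\leq\alpha+1}\subseteq\cA_{\leq\beta}$ while $X_{\beta}\notin\cA_{\leq\beta}$, so the $X_{\alpha}$ are pairwise non-isomorphic. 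This yields an injection from the proper class of all ordinals into the set of isomorphism classes of objects of $\cA$, which is impossible once $\cA$ is essentially small --- the only real hypothesis needed, and one satisfied in our application, where $\cA$ is the heart of a bounded t-structure on an essentially small triangulated category. The contradiction forces $\cA_{\leq\alpha}=\cA$ for some $\alpha$, i.e.\ $\KGdim(\cA)$ is defined.
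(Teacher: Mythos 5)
Your proof is correct. Note, though, that the paper does not prove this lemma at all: it simply cites Lemma 14.1.13 of \cite{Kra22}, and your argument is essentially the standard proof underlying that citation. Both of your ingredients are sound: a nonzero abelian category with ACC (resp.\ DCC) on subobjects has a simple object via a maximal proper (resp.\ minimal nonzero) subobject, and the Serre quotient $\cA/\cB$ inherits the chain condition because the subobject lattice of $Q(X)$ embeds, via saturation, as a subposet of the subobject lattice of $X$ --- this is indeed classical, due to Gabriel, and citing it is appropriate. Your termination step is also fine, and you correctly isolate the one hypothesis the lemma as stated quietly omits: essential smallness, without which the Hartogs-type injection of ordinals into isomorphism classes gives no contradiction; in the paper's application this holds, since $\cA$ is the heart of a bounded t-structure on an admissible subcategory of $\Dp(X)$. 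Two small points worth making explicit in a write-up: (1) your strict-increase step uses that the chain is non-decreasing, i.e.\ $\cA_{\leq\alpha}\subseteq\cA_{\leq\alpha+1}$; with Definition \ref{def-KGdim} taken literally this needs a one-line check (e.g.\ for $Y\in\cA_{\leq\alpha}$ and $X$ with $Q(X)$ simple, $Q(X\oplus Y)\cong Q(X)$ is simple and $Y$ is a quotient of $X\oplus Y$), which is available exactly because simples exist at every stage under your hypotheses; (2) rather than extracting pairwise non-isomorphic objects $X_\alpha$, one can equivalently observe that a strictly increasing ordinal-indexed chain of Serre subcategories injects the ordinals into the power set of the set of isomorphism classes --- the same cardinality contradiction, slightly more directly. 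Neither point is a gap; the proof stands as written.
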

	
	\begin{proof}
		See Lemma 14.1.13 of \cite{Kra22}.
	\end{proof}
    We know more about hearts of admissible subcategories of $\Dp(X)$.
	\begin{lemma}\label{predecessor}
		Let $\cA$ be an abelian category with $\KGdim(\cA)=\alpha$ defined. If $\cA$ admits a  generator, then $\alpha=(\alpha-1)+1$ has a predecessor $(\alpha-1)$ (i.e. not a limit ordinal).
	\end{lemma}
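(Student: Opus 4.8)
The plan is to argue by contradiction: I would assume that $\alpha$ is a limit ordinal and use the defining behavior of the Krull-Gabriel filtration at limit stages, together with the generator hypothesis, to reach a contradiction. Concretely, recall from Definition \ref{def-KGdim} that each $\cA_{\leq \alpha'}$ is a Serre subcategory of $\cA$, and that for a limit ordinal $\alpha$ one has $\cA_{\leq \alpha} = \bigcup_{\alpha' < \alpha} \cA_{\leq \alpha'}$. Since $\KGdim(\cA) = \alpha$ means $\cA = \cA_{\leq \alpha}$, the assumption that $\alpha$ is a limit ordinal yields $\cA = \bigcup_{\alpha' < \alpha} \cA_{\leq \alpha'}$, where this equality is to be read on the level of objects: every object of $\cA$ already lies in $\cA_{\leq \alpha'}$ for some single $\alpha' < \alpha$.

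Next I would apply this to the generator $F$. Since $F$ is an object of $\cA$, the union forces $F \in \cA_{\leq \alpha'}$ for some $\alpha' < \alpha$. The key observation is that $\cA_{\leq \alpha'}$ is a \emph{proper} Serre subcategory: it is a Serre subcategory by construction, and it is proper because $\alpha$ is by definition the minimal ordinal with $\cA = \cA_{\leq \alpha}$, so $\alpha' < \alpha$ gives $\cA_{\leq \alpha'} \neq \cA$. But then $F$ is contained in a proper Serre subcategory, contradicting the defining property of a generator (Definition \ref{def-generator-abelian}). Hence $\alpha$ cannot be a limit ordinal, so it is a successor ordinal $\alpha = (\alpha-1)+1$ with predecessor $(\alpha-1)$, as claimed. (In the paper's indexing convention, where $\cA_{\leq -1} = 0$, the value $\alpha = 0$ is already a successor of $-1$, so no separate base case is needed.)

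I do not expect any serious obstacle: the argument is a direct unwinding of the two definitions involved. The only points requiring a moment's care are (i) interpreting the equality $\cA = \bigcup_{\alpha' < \alpha} \cA_{\leq \alpha'}$ correctly, namely that membership in the union is membership in some single term, which is exactly what the limit-ordinal clause of Definition \ref{def-KGdim} provides; and (ii) invoking the minimality clause in the definition of $\KGdim$ to guarantee that the intermediate subcategories $\cA_{\leq \alpha'}$ with $\alpha' < \alpha$ are genuinely proper. Both are built into the stated definitions, so the proof is short once the contradiction is set up.
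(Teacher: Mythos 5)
Your proof is correct and is essentially the paper's own argument: the paper likewise takes a generator $G$, notes that $\cA=\bigcup_{\beta<\alpha}\cA_{\leq\beta}$ would force $G\in\cA_{\leq\beta}$ for a single $\beta<\alpha$, and derives a contradiction with the minimality of $\KGdim(\cA)$ via the generator property. Your formulation (properness of $\cA_{\leq\alpha'}$ contradicting Definition \ref{def-generator-abelian}) is just the contrapositive packaging of the paper's step ``$G$ a generator implies $\cA_{\leq\beta}=\cA$, hence $\KGdim(\cA)\leq\beta$,'' so there is nothing further to add.
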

	
	\begin{proof}
		Let $G\in \cA$ be a generator. If $\cA=\bigcup_{\beta<\KGdim(\cA)} \cA_{\leq \beta}$, then $G\in \cA_{\leq \beta}$ for some $\beta<\KGdim(\cA)$. However, since $G$ is a generator, we have $\cA_{\leq \beta}=\cA$. Hence by definition, we have $\KGdim(\cA)\leq \beta$, a contradiction.
	\end{proof}

    \subsection{Proof of the main theorem}
    
It is crucial that a geometric phantom admits a generator.

        \begin{lemma}\label{thick closure}
            Let $\cT$ be a triangulated category, and let $\cA$ be the heart of a bounded t-structure of $\cT$. Let $\cA'\subset\cA$ be a Serre subcategory. Then the following three categories are equivalent:
            \begin{enumerate}
                \item\label{a} The smallest triangulated subcategory $\cB$ that contains $\cA'$.
                \item\label{b} The thick closure (Definition \ref{def-generator-triangulated}) $\cC$ of $\cA'$.
                \item\label{c} The category $\cD:=\{E|\cH^i(E)\in\cA'\}$.
            \end{enumerate}
        \end{lemma}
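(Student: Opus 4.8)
The plan is to prove the chain of inclusions $\cB \subseteq \cC \subseteq \cD \subseteq \cB$, since all three are triangulated subcategories containing $\cA'$. The inclusion $\cB \subseteq \cC$ is immediate: $\cC$ is thick, hence in particular triangulated and contains $\cA'$, so it contains the smallest such subcategory $\cB$. The inclusion $\cC \subseteq \cD$ requires checking that $\cD$ is itself thick and triangulated: it contains $\cA'$ (objects of $\cA'$ are concentrated in degree $0$ with cohomology in $\cA'$), it is closed under shifts and cones by the long exact cohomology sequence of the t-structure together with the fact that $\cA'$ is closed under subobjects, quotients, and extensions, and it is closed under direct summands because $\cH^i$ respects direct sums and $\cA'$, being a Serre subcategory, is closed under summands. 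Hence $\cD$ is thick and contains $\cA'$, so $\cC \subseteq \cD$.

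The substantive step is the reverse inclusion $\cD \subseteq \cB$. The plan is to argue by induction on the cohomological amplitude of an object $E \in \cD$. Since the t-structure is bounded, each $E$ has finitely many nonzero cohomology objects $\cH^i(E)$, all lying in $\cA'$ by definition of $\cD$. First I would handle objects concentrated in a single degree: if $E \cong \cH^i(E)[-i]$ with $\cH^i(E) \in \cA'$, then $E$ is a shift of an object of $\cA'$, hence lies in $\cB$. For the inductive step, given $E$ with amplitude spanning degrees $[a,b]$ with $a < b$, I would use the truncation triangle
\[
\tau_{\leq b-1} E \too E \too \tau_{\geq b} E \too (\tau_{\leq b-1}E)[1]
\]
associated to the t-structure. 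Here $\tau_{\geq b} E \cong \cH^b(E)[-b]$ is a shift of an object of $\cA'$ and so lies in $\cB$, while $\tau_{\leq b-1} E$ has strictly smaller amplitude and its cohomology objects are exactly $\cH^i(E)$ for $i \leq b-1$, all in $\cA'$, so it lies in $\cB$ by the inductive hypothesis. Since $\cB$ is triangulated, the cone $E$ of these two objects lies in $\cB$, completing the induction.

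The main obstacle I anticipate is verifying that $\cD$ is genuinely closed under taking cones, as this is where the Serre subcategory hypotheses on $\cA'$ are essential and where one must be careful. Given a triangle $E \to F \to G \to E[1]$ with $E, F \in \cD$, the long exact sequence in cohomology
\[
\cdots \too \cH^{i}(E) \too \cH^{i}(F) \too \cH^{i}(G) \too \cH^{i+1}(E) \too \cdots
\]
exhibits each $\cH^i(G)$ as an extension of a subobject of $\cH^{i+1}(E)$ by a quotient of $\cH^i(F)$; closure of $\cA'$ under subobjects, quotients, and extensions then forces $\cH^i(G) \in \cA'$. One must take care that these cohomology objects are defined via the heart $\cA$ and that the long exact sequence is the standard one induced by a t-structure, so that the abelian-category operations in $\cA$ are the ones to which the Serre conditions apply. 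The closure under direct summands similarly reduces to the fact that a summand in $\cA$ of an object of $\cA'$ lies in $\cA'$, which again uses that $\cA'$ is a Serre subcategory.
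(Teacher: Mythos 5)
Your proof is correct and follows essentially the same route as the paper: your verification that $\cD$ is a thick triangulated subcategory containing $\cA'$ (via the long exact cohomology sequence together with the Serre and direct-summand closure of $\cA'$) is exactly the content of the paper's induction on the generation levels $\langle\cA'\rangle_k$ of the thick closure, just repackaged. Your explicit truncation induction on cohomological amplitude for $\cD\subseteq\cB$ spells out a step the paper merely asserts, and organizing everything as the cyclic chain $\cB\subseteq\cC\subseteq\cD\subseteq\cB$ has the minor bonus of making the appeal to \cite{BV03} for the equivalence of \itemref{a} and \itemref{b} unnecessary.
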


        \begin{proof}
            First note that $\cA'$ is closed under direct summand because it is a Serre subcategory of $\cA$.
    The equivalent of \ref{a} and \ref{b} is well known (e.g. \cite{BV03}). We show that these are equivalent to \ref{c}. For any object $E\in\cD$, we have $E\in\cB$. We prove the converse by induction. Suppose $E\in\langle\cA'\rangle_1$, then $E$ is a direct summand of a finite direct sum of shifts of objects inside $\cA'$. Then $\cH^i(E)$ is a direct summand of finite direct sum of objects inside $\cA'$. Because $\cA'$ is closed under direct summand, we have $E\in\cD$. Now suppose $E_1\in\langle\cA'\rangle_1$, $E_2\in\langle\cA'\rangle_k$, which we know that $E_1,E_2\in\cD$. Then for any extension $E$ of $E_1,E_2$, we have an exact sequence
            $$
            \rightarrow\cH^i(E_1)\rightarrow \cH^i(E)\rightarrow \cH^i(E_2)\rightarrow
            $$
            for every $i\in \ZZ$.
            Because $\cA'$ is a Serre subcategory, we have $\cH^i(E)\in\cA'$. Because $\cA'$ is closed under direct summand, we know that any direct summand $F$ of $E$ also has the property that $\cH^i(F)\in\cA'$. Thus by induction, we are done.
        \end{proof}
        
        \begin{corollary}\label{generatorA}
        	Let $\cA$ be the heart of a bounded t-structure on $\cC$. Suppose $G\in \cC$ is a classical generator and let $G':=\bigoplus_{i\in \ZZ}\cH^{i}_{\cA}(G)$. Then $G'$ is a generator of $\cA$ (Definition \ref{def-generator-abelian}).
        \end{corollary}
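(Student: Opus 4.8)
The plan is to unwind the definition of a generator of an abelian category: I want to show that the only Serre subcategory of $\cA$ containing $G'$ is $\cA$ itself, which is exactly the assertion of Definition \ref{def-generator-abelian}. So I would start by fixing an arbitrary Serre subcategory $\cA'\subset\cA$ with $G'\in\cA'$, and work toward the conclusion $\cA'=\cA$.

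The first step exploits that a Serre subcategory is closed under direct summands. Since $G'=\bigoplus_{i\in\ZZ}\cH^{i}_{\cA}(G)\in\cA'$, each individual cohomology object $\cH^{i}_{\cA}(G)$ is a direct summand of $G'$ and hence lies in $\cA'$. In the notation of Lemma \ref{thick closure}\ref{c}, applied with ambient triangulated category $\cC$ and heart $\cA$, this says precisely that $G\in\cD=\{E\mid\cH^{i}(E)\in\cA'\text{ for all }i\}$. By that lemma, $\cD$ coincides with the thick closure of $\cA'$ inside $\cC$, so I will have established that $G$ belongs to the thick closure of $\cA'$.

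Next I would invoke that $G$ is a classical generator, meaning its thick closure is all of $\cC$. Since the thick closure of $\cA'$ is a thick triangulated subcategory containing $G$, it must contain the thick closure of $G$, and therefore equals $\cC$. Translating back through Lemma \ref{thick closure} gives $\cD=\cC$: every object $E\in\cC$ satisfies $\cH^{i}(E)\in\cA'$ for all $i$. Finally, applying this to an arbitrary object $A\in\cA$, whose only nonzero cohomology is $\cH^{0}(A)=A$, yields $A\in\cA'$. Hence $\cA\subseteq\cA'$, so $\cA'=\cA$, which proves that $G'$ is a generator.

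The only place that calls for genuine care is the translation step through Lemma \ref{thick closure}: I must make sure the cohomological membership criterion really matches the thick closure (which the lemma supplies), and that closure under direct summands legitimately extracts each $\cH^{i}_{\cA}(G)$ from the single object $G'$. Everything downstream is a formal consequence of $G$ being a classical generator, so I do not anticipate a substantive obstacle beyond correctly matching the hypotheses of the lemma.
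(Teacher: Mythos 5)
Your proposal is correct and takes essentially the same route as the paper: both arguments hinge on Lemma \ref{thick closure} to identify the thick closure of $\cA'$ inside $\cC$ with the cohomological membership category $\cD=\{E\mid \cH^i(E)\in\cA'\}$, and then invoke that $G$ is a classical generator. The only difference is presentational---the paper argues by contradiction (a proper Serre subcategory containing $G'$ would give a proper thick subcategory containing $G$), whereas you argue directly and make explicit the final step that every $A\in\cA$, having cohomology concentrated in degree $0$, lands in $\cA'$.
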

        
        \begin{proof}
        	If $G'$ is contained in a proper Serre subcategory $ \cA' \subsetneq \cA$, then $G$ is contained in the thick closure of $\cA'$, which is a proper thick subcategory of $\cC$ by Lemma \ref{thick closure}. Since $G\in \cC$ is a generator, this is a contradiction. 
        \end{proof}

	\begin{theorem}\label{theorem-main}
		Let $\cC$ be a geometric phantom or quasi-phantom category (Definition \ref{def-phantom}). Then $\cC$ does not admit a Noetherian or Artinian bounded t-structure. 
	\end{theorem}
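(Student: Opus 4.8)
The plan is to assume, for contradiction, that $\cC$ carries a Noetherian or Artinian bounded t-structure with heart $\cA$, and then to manufacture an element of infinite order in $K_{0}(\cC)$; this is impossible, since $K_{0}(\cC)$ is zero (phantom) or torsion (quasi-phantom). First I would fix a classical generator of $\cC$: as $\cC\subset\Dp(X)$ is admissible and $\Dp(X)$ admits a classical generator by Theorem \ref{theorem-classicalgenerator}, the projection of that generator into $\cC$ is a classical generator $G\in\cC$. Applying Corollary \ref{generatorA}, the object $G':=\bigoplus_{i\in\ZZ}\cH^{i}_{\cA}(G)$ is then a generator of $\cA$ in the sense of Definition \ref{def-generator-abelian}.

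Next I would feed this into the Krull--Gabriel machinery. Since $\cA$ is Noetherian or Artinian, Lemma \ref{welldefineddim} guarantees that $\KGdim(\cA)=\alpha$ is defined, and since $\cA$ has the generator $G'$, Lemma \ref{predecessor} forces $\alpha=\beta+1$ to be a successor ordinal. By minimality of $\alpha$ the Serre subcategory $\cA_{\leq\beta}\subsetneq\cA$ is proper, so the Gabriel quotient $\bar\cA:=\cA/\cA_{\leq\beta}$ is a \emph{nonzero} abelian category. Moreover, because $\cA=\cA_{\leq\beta+1}$ is by construction the Serre subcategory generated by the objects that are simple in $\bar\cA$, the quotient $\bar\cA$ coincides with the Serre subcategory generated by its own simple objects, i.e. every object of $\bar\cA$ has finite length. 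In particular $\bar\cA$ contains a nonzero simple object $\bar S$.

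The crux is then to convert $\bar S$ into a non-torsion class. In the length category $\bar\cA$, Jordan--H\"older multiplicity of $\bar S$ is additive on short exact sequences and hence defines a group homomorphism $K_{0}(\bar\cA)\to\ZZ$ sending $[\bar S]\mapsto 1$; thus $[\bar S]$ has infinite order in $K_{0}(\bar\cA)$. Writing $\bar S=Q(S)$ for some lift $S\in\cA$ (the quotient functor $Q\colon\cA\to\bar\cA$ is essentially surjective), functoriality of $K_{0}$ under the exact functor $Q$ gives $K_{0}(Q)([S])=[\bar S]$, so $[S]\in K_{0}(\cA)$ also has infinite order. Finally, the standard isomorphism $K_{0}(\cA)\cong K_{0}(\cC)$ attached to the bounded t-structure (with inverse $[E]\mapsto\sum_{i}(-1)^{i}[\cH^{i}_{\cA}(E)]$) transports $[S]$ to an element of infinite order in $K_{0}(\cC)$, contradicting that $K_{0}(\cC)$ is zero or torsion and proving the theorem.

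The main obstacle, and the step deserving the most care, is the third one: one must be sure that a successor value of $\KGdim$ genuinely produces a nonzero quotient $\bar\cA$ that is an honest \emph{length} category (so that a simple object with a well-defined multiplicity homomorphism exists), rather than having the filtration exhaust $\cA$ only in the limit; and one must check that infinite order is preserved along $K_{0}(\cA)\to K_{0}(\bar\cA)$ and back across $K_{0}(\cA)\cong K_{0}(\cC)$. The remaining steps are bookkeeping with Corollary \ref{generatorA} and Lemmas \ref{welldefineddim} and \ref{predecessor}.
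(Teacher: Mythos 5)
Your proposal is correct and takes essentially the same route as the paper's own proof: a classical generator via Theorem \ref{theorem-classicalgenerator} and Corollary \ref{generatorA}, the Krull--Gabriel dimension via Lemmas \ref{welldefineddim} and \ref{predecessor}, and the observation that the top Gabriel quotient is a nonzero length category with torsion-free $K_{0}$. The only cosmetic difference is the final step, where you lift an explicit infinite-order class $[\bar{S}]$ through the quotient functor $Q$ and the identification $K_{0}(\cA)\cong K_{0}(\cC)$, whereas the paper invokes the right-exact localization sequence $K_{0}(\cA_{\leq \alpha-1})\to K_{0}(\cA)\to K_{0}(\cA/\cA_{\leq \alpha-1})\to 0$; both yield the same contradiction.
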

	
	\begin{proof}
		Let $X$ be a quasi-compact, quasi-separated scheme such that $\cC \subset \Dp(X)$ is an admissible embedding.
		Let $(\cC^{\geq 0}, \cC^{\leq 0})$ be a Noetherian (Artinian) bounded t-structure and $\cA:= \cC^{\geq 0} \cap \cC^{\leq 0}$ be its heart. By Theorem \ref{theorem-classicalgenerator}, $\Dp(X)$ admits a classical generator. Since $\cC\subset \Dp(X)$ is admissible, $\cC$ admits a classical generator, denoted by $G$. Let 
		$$
		G'=\bigoplus_{i\in \ZZ}\cH^{i}_{\cA}(G) \in \cA. $$
		By Corollary \ref{generatorA}, $G'$ is a generator of $\cA$ (Definition \ref{def-generator-abelian}). 
		By Lemma \ref{welldefineddim}, since $\cA$ is Noetherian (Artinian), $\KGdim(\cA)$ is defined. Denote $\alpha=\KGdim(\cA)$. By Lemma \ref{predecessor}, $\KGdim(\cA)=\alpha=(\alpha-1)+1$ has a predecessor $(\alpha-1)$. The Serre quotient 
		$$
		\cA_{\leq \alpha-1} \to \cA \twoheadrightarrow \cA/\cA_{\leq \alpha-1}
		$$
		induces an exact sequence on $K_{0}$:
		$$
		K_{0}(\cA_{\leq \alpha-1} )\to K_{0}(\cA) \to K_{0}(\cA/\cA_{\leq \alpha-1}) \to 0.
		$$
 By Definition \ref{def-KGdim},
 $\cA/\cA_{\leq \alpha-1}$ is a length category (i.e. generated by simple objects). Hence $K_{0}(\cA/\cA_{\leq \alpha-1})$ is nonzero and torsion free. Therefore $K_{0}(\cA)$ cannot be 0 or torsion.
	\end{proof}

	\section{References}
	
	\bibliographystyle{alpha}
	\renewcommand{\section}[2]{} 
	\bibliography{reference}

	\end{document}